\newtheorem{theorem}{Theorem}[section]
\newtheorem{proposition}{Proposition}[section]
\theoremstyle{definition}
\newtheorem{example}[theorem]{Example}
\theoremstyle{remark}
\newtheorem{remark}[theorem]{Remark}
\numberwithin{equation}{section}
\begin{document}

\title{Secondary  Hochschild Cohomology}
\author{Mihai D. Staic}
\address{
Department of Mathematics and Statistics, Bowling Green State University, Bowling Green, OH 43403 }
\address{Institute of Mathematics of the Romanian Academy, PO.BOX 1-764, RO-70700 Bu\-cha\-rest, Romania.}
\thanks{This work was partially supported by a grant of the Romanian National Authority for Scientific Research, CNCS-UEFISCDI, project number PN-II-ID-PCE-2011-3-0635, contract nr. 253/5.10.2011.}
\email{mstaic@gmail.com}



\subjclass[2010]{Primary  16E40, Secondary  16S80}
\date{January 1, 1994 and, in revised form, June 22, 1994.}


\keywords{Hochschild  cohomology, deformation of algebras}

\begin{abstract}  In this paper we define a new cohomology theory for a $B$-algebra $A$. We use this cohomology to study  deformations of algebras $A[[t]]$, that have a  $B$-algebra structure.
\end{abstract}

\maketitle


%

\section*{Introduction}

Hochschild cohomology is used  to describe deformations of algebras. More precisely, let $A$ be an algebra, and  $m_t:A[[t]]\otimes A[[t]]\to A[[t]]$ a product that is determined by $m_t(a\otimes b)=ab+c_1(a\otimes b)t+c_2(a\otimes b)t^2+...$, where $c_i:A\otimes A\to A$. It is known from  \cite{g} that if the  map $m_t$ is associative $mod\; t^2$, then  $c_1$ is a two cocycle and the obstruction for $m_t$ to be associative $mod\; t^n$ is an element in $H^3(A,A)$. If $A$ is a $B$-algebra and the above maps $c_i$ are $B$-linear, one can give a  trivial $B$-algebra structure on $A[[t]]$. 
In this paper we are interested in non-trivial $B$-algebra structures on $A[[t]]$. 

First, we notice that having $\varepsilon:B\to A$ that gives a $B$-algebra structure on $A$, is equivalent with having a family of products $\{m_{\alpha}\}_{\alpha\in B}$, $m_{\alpha}:A\otimes A\to A$, ($m_{\alpha}(a\otimes b)=\varepsilon(\alpha)ab$) that satisfies the following generalized associativity condition $m_{\alpha\beta}(id\otimes m_{\gamma})=m_{\beta\gamma}(m_{\alpha}\otimes id)$. We consider a deformation of this family of products to  $m_{\alpha,t}:A[[t]]\otimes A[[t]]\to A[[t]]$, $m_{\alpha,t}(a\otimes b)=\varepsilon(\alpha)ab+c_1(a\otimes b\otimes \alpha)t+c_2(a\otimes b\otimes \alpha)t^2+...$, where  $c_i:A\otimes A\otimes B\to A$. 
 We  introduce a new cohomoogy theory called the secondary cohomology and use it to study under what condition the products  $\{m_{\alpha ,t}\}_{\alpha\in B}$  give a $B$-algebra structure on $A[[t]]$.  More precisely, if $A$ and $B$ are $k$-algebras, $B$ is commutative, $\varepsilon:B\to A$ is a morphism of $k$-algebras such that $\varepsilon(B)\subset {\mathcal Z}(A)$, and $M$ is an $A$-bimodule,  we construct  cohomology groups $H^n((A,B,\varepsilon);M)$. When $B=k$ we have that $H^n((A,k,\varepsilon);M)$ is the usual Hochschild cohomology. 

We prove that  the family of multiplications $\{m_{\alpha,t}\}_{\alpha\in B}$  satisfies the condition $m_{\alpha\beta,t}(id\otimes m_{\gamma,t})=m_{\beta\gamma,t}(m_{\alpha,t}\otimes id)$ $mod\;t^2$ if and only if $c_1$ is a 2-cocycle.  Moreover the obstruction for $m_{\alpha,t}$ to be associative $mod\; t^n$ is an element in $H^3((A,B,\varepsilon),A)$. In particular, we give conditions under which we have a $B$-algebra structure on $A[[t]]$. In general this is not the trivial multiplication with elements in $B$. 

In the last section we define a secondary cyclic module for a commutative $k$-algebra $B$, and discuss the relation with the usual cyclic module associated to a $k$-algebra.

\section{Preliminaries}

We recall from \cite{g}, \cite{gs} and \cite{lo} a few facts about deformations of algebras and Hochschild cohomology. 

In this paper $k$ is a field and $\otimes=\otimes_k$. Suppose that $A$ is an associative $k$-algebra (not neccesarly commutative), and $M$ is an $A$-bimodule. Define  $C^n(A, M)=Hom_k(A^{\otimes n}, M)$, and $\delta_n:C^n(A,M)\to C^{n+1}(A,M)$ determined by:
\begin{eqnarray*}
&&\delta_n(f)(a_0\otimes a_1\otimes ...\otimes a_n)=a_0f(a_1\otimes ...\otimes a_n)+\sum_{i=1}^n(-1)^if(a_0\otimes ...\otimes a_{i-1}a_{i}\otimes \\&& ...\otimes a_n)+
(-1)^{n+1}f(a_0\otimes ...\otimes a_{n-1})a_n.
\end{eqnarray*}
One can show that $\delta_{n+1}\delta_n=0$.
The homology of this complex is denoted by $H^n(A, M)$ and is called the Hochschild cohomology of $A$ with coefficients in $M$. 

Suppose that $m_t:A[[t]]\otimes A[[t]]\to A[[t]]$, where $$m_t(a\otimes b)=ab+c_1(a\otimes b)t+c_2(a\otimes b)t^2+...$$
It is known that if $m_t$ is associative  $mod\; t^2$, then $c_1$ is a two cocylce and the isomorphism type of $(A[[t]],m_t)$ determines a unique element in $H^2(A,A)$.  If $m_t$ is associative $mod \;t^n$, then the obstruction for $m_t$ to be associative mod $t^{n+1}$ is a element in $H^3(A,A)$. More precisely, we must have that $\sum_{p+q=n}c_p(c_q(a\otimes b)\otimes c)-c_p(a\otimes c_q(b\otimes c))$ must be zero as an element in $H^3(A,A)$ (see \cite{gs} for details).

We recall from \cite{may} and \cite{w} the construction of the simplicial group $K(G,2)$. Let $G$ be an abelian group and define
$$K_q=G^{\frac{q(q-1)}{2}}.$$
The elements of  $G^{\frac{q(q-1)}{2}}$ are $\frac{q(q-1)}{2}$-tuples $(g_{u,v})_{(0\leq u<v\leq q-1)}$ with the index in the lexicographic order:
$$(g_{0,1},g_{0,2},...,g_{0,q-1},g_{1,2},g_{1,3},...,g_{1,q-1},...,g_{q-2,q-1}).$$
For every $0\leq i\leq q$ we define
$$\partial_i:K_q=G^{\frac{q(q-1)}{2}}\to K_{q-1}=G^{\frac{(q-1)(q-2)}{2}},$$
$\partial_i((g_{u,v})_{(0\leq u<v\leq q-1)})=(h_{u,v})_{(0\leq u<v\leq q-2)}$ where
$$h_{u,v}= \left\{
\begin{array}{c l}
g_{u,v}\; &{\rm if}\; 0\leq u<v < i-1\\
g_{u,v}g_{u,v+1}\; &{\rm if}\; 0\leq u<v= i-1\\
g_{u,v+1}\; &{\rm if}\; 0\leq u< i-1 < v\\
g_{u,v+1}g_{u+1,v+1}\; &{\rm if}\; 0\leq u= i-1 < v\\
g_{u+1,v+1}\; &{\rm if}\; i-1<u <v.
\end{array}
\right.
$$
Also, we define:
$$s_i: K_q=G^{\frac{q(q-1)}{2}}\to K_{q+1}=G^{\frac{(q+1)q}{2}}$$
$s_i((g_{u,v})_{(0\leq u<v\leq q-1)})=(k_{u,v})_{(0\leq u<v\leq q)}$ where

$$k_{u,v}= \left\{
\begin{array}{c l}
g_{u,v}\; &{\rm if}\; 0\leq u<v < i\\
1\; &{\rm if}\; 0\leq u<v = i\\
g_{u,v-1}\; &{\rm if}\; 0\leq u<i<v\\
g_{u,v-1}\; &{\rm if}\; 0\leq u=i< v-1\\
1\; &{\rm if}\; 0\leq u=i=v-1\\
g_{u-1,v-1}\; &{\rm if}\; 0\leq i<u <v.
\end{array}
\right.
$$

One can show that the above construction gives a  $K(G,2)$ simplicial group. If we define $\tau_q:K_q\to K_q$,
$$\tau_q((g_{u,v})_{(0\leq u<v\leq q-1)})=(l_{u,v})_{(0\leq u<v\leq q-1)}$$ where
$$l_{u,v}= \left\{
\begin{array}{c l}
g_{v-1,v}g_{v-1,v+1}...g_{v-1,q-1}g_{v,v+1}^{-1}g_{v,v+2}^{-1}...g_{v,q-1}^{-1}\; {\rm if}\; 0=u<v\\
g_{u-1,v-1}\; {\rm if}\; 0< u<v,\\
\end{array}
\right.$$
then $(K_q,\partial_i, s_i,\tau_q)$ is a cyclic simplicial group. 

In cite \cite{sm} we  replaced the group $G$  with a commutative  Hopf algebras $H$, and we defined  a secondary cyclic module for $H$. The naive generalization  to algebras does not work because the map $\tau_n$ involves the existence of an inverse (antipode) in the group $G$ (Hopf algebra $H$). In the last section we will discuss how this problem can be avoided.

\section{Associative Families of Products}

\subsection{Motivation}

In \cite{staic} we introduced the secondary cohomology of a pair $(G,H)$  with coefficients in a $G$-module $K$ ($G$ is a group and $H$ is an  abelian group). In that cohmology, a 2-cocycle is a map $f: G\times G\times H\to K$ such that $$g_1f(g_2,g_3,a_{12})-f(g_1g_2,g_3,a_{02})+f(g_1,g_2g_3,a_{01}a_{02}a_{12}^{-1})-f(g_1,g_2,a_{01})=0.$$

It is well known that if $H$ is trivial and  $K=k^*$ (with the trivial $G$-module action), then a 2-cocycle will define an associative multiplication $m:k^G\times k^G\to k^G$ determined by $m(e_g,e_h)=f(g,h)e_{gh}$ (see  for example \cite{tu}). 

If the group $H$ is non trivial, then we get a family of products $m_{a}:k^G\times k^G\to k^G$ for all $a\in H$, determined by $m_{a}(e_g,e_h)=f(g,h,a)e_{gh}$. This family satisfies the following generalized associativity condition $$m_{a_{01}a_{02}a_{12}^{-1}}(id\otimes m_{a_{12}})=m_{a_{02}}(m_{a_{01}}\otimes id).$$
If we make a change of variables $\alpha=a_{01}$, $\beta=a_{02}a_{12}^{-1}$ and $\gamma=a_{12}$ we have:
\begin{eqnarray}
m_{\alpha\beta}(id\otimes m_{\gamma})=m_{\beta\gamma}(m_{\alpha}\otimes id).
\label{genA}
\end{eqnarray}

In this paper we will study families of products that satisfy an associativity condition similar with (\ref{genA}).

\subsection{Associative Families of Products and $B$-algebras} \label{subsec2}
Suppose that $A$ is a $k$-algebra, $B$ is a commutative $k$-algebra, and $\varepsilon:B\to A$ is a morphism of $k$-algebras such that $\varepsilon(B)\subset {\mathcal Z}(A)$ (i.e. $A$ is a $B$-algebra). For each $\alpha \in B$ we have a map $m_{\alpha}:A\otimes A\to A$, defined by $$m_{\alpha}(a\otimes b)=\varepsilon(\alpha)ab=a\varepsilon(\alpha)b=ab\varepsilon(\alpha).$$ 
Once can easily check that for all $\alpha$, $\beta$, $\gamma \in B$, and $q\in k$ we have
\begin{eqnarray}\label{genA31}
m_{\alpha+\beta}(a\otimes b)=m_{\alpha}(a\otimes b)+m_{\beta}(a\otimes b),
\end{eqnarray}
\begin{eqnarray}\label{genA32}
m_{q\alpha}(a\otimes b)=qm_{\alpha}(a\otimes b),
\end{eqnarray}
\begin{eqnarray}\label{genA33}
m_{\beta\gamma}(m_{\alpha}\otimes id)=m_{\alpha\beta}(id\otimes m_{\gamma}).
\end{eqnarray}

Conversely, suppose that $B$ is a commutative $k$-algebra, $A$ is a $k$-vector space and we have a family of products $
{\mathcal M}=\{m_{\alpha}\}_{\alpha\in B}$, $m_{\alpha}:A\otimes A\to A$ such that (\ref{genA31}), (\ref{genA32}) and (\ref{genA33}) hold, and  $(A, m_{1})$ is a $k$-algebra with unit. We want to show that  $\varepsilon:B\to A$, $\varepsilon(\alpha)=m_{\alpha}(1\otimes 1)$ is a morphism of $k$-algebras and $\varepsilon(B)\subset {\mathcal Z}(A)$. 
Indeed, for all $a\in A$, $\alpha$, $\beta\in B$ and $p$, $q\in k$ we have:
\begin{eqnarray*}
\varepsilon(q\alpha+p\beta)&=&m_{q\alpha+p\beta}(1\otimes 1)\\
&=&pm_{\alpha}(1\otimes 1)+qm_{\beta}(1\otimes 1)\\
&=&p\varepsilon(\alpha)+q\varepsilon(\beta),
\end{eqnarray*}
and
\begin{eqnarray*}
\varepsilon(\alpha)\varepsilon(\beta)&=&m_{1}(m_{\alpha}(1\otimes1)\otimes m_{\beta}(1\otimes 1))\\
&=&m_{\beta}(m_{1}(m_{\alpha}(1\otimes1)\otimes 1)\otimes 1)\\
&=&m_{\beta}(m_{\alpha}(1\otimes m_1(1\otimes1))\otimes 1)\\
&=&m_{\beta}(m_{\alpha}(1\otimes 1)\otimes 1)\\
&=&m_{\alpha\beta}(1\otimes m_1(1\otimes 1))\\
&=&m_{\alpha\beta}(1\otimes 1)\\
&=&\varepsilon(\alpha\beta).
\end{eqnarray*}
So $\varepsilon$ is a morphism of $k$-algebras. We also have:
\begin{eqnarray*}
a\varepsilon(\alpha)&=&m_{1}(a\otimes \varepsilon(\alpha))\\
&=&m_{1}(a\otimes m_{\alpha}(1\otimes1))\\
&=&m_{\alpha}(m_1(a\otimes 1)\otimes1))\\
&=&m_{\alpha}(m_1(1\otimes a)\otimes1))\\
&=&m_{\alpha}(1\otimes m_1(a\otimes 1)))\\
&=&m_{\alpha}(1\otimes m_1(1\otimes a)))\\
&=&m_{1}(m_{\alpha}(1\otimes 1) \otimes a)))\\
&=&m_1(\varepsilon(\alpha) \otimes a)\\
&=&a\varepsilon(\alpha)
\end{eqnarray*}
which means that $\varepsilon(B)\subset {\mathcal Z}(A)$. To summarize we have the following result:
\begin{proposition} Let $B$ be a commutative $k$-algebra and 
${\mathcal M}=\{m_{\alpha}\}_{\alpha\in B}$, $m_{\alpha}:A\otimes A\to A$ a family of products  such that $(A, m_{1})$ is a $k$-algebra with identity. Then (\ref{genA31}), (\ref{genA32}) and (\ref{genA33}) hold if and only if $\varepsilon:B\to A$,  $\varepsilon(\alpha)=m_{\alpha}(1\otimes 1)$ gives a $B$-algebra structure on $A$.  \label{propfam}
\end{proposition}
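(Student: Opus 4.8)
The plan is to prove the two implications separately, observing that essentially all the content is carried by the generalized associativity (\ref{genA33}) together with the unit axiom of $m_1$.

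For the implication asserting that a $B$-algebra structure produces the three relations, I would substitute the defining formula $m_\alpha(a\otimes b)=\varepsilon(\alpha)ab$ directly. Relations (\ref{genA31}) and (\ref{genA32}) then follow from the additivity and $k$-linearity of $\varepsilon$, while both sides of (\ref{genA33}) evaluate to $\varepsilon(\alpha)\varepsilon(\beta)\varepsilon(\gamma)\,abc$ once one uses that $\varepsilon$ is multiplicative, that $B$ is commutative, and that $\varepsilon(B)\subseteq{\mathcal Z}(A)$ allows the central factors to be moved past $a$, $b$, $c$. This is exactly the routine verification sketched before the statement.

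For the converse I start from a family $\{m_\alpha\}$ satisfying (\ref{genA31})--(\ref{genA33}) with $(A,m_1)$ unital, put $\varepsilon(\alpha)=m_\alpha(1\otimes 1)$, and check the defining properties of a $B$-algebra. The $k$-linearity of $\varepsilon$ is immediate from (\ref{genA31}) and (\ref{genA32}) evaluated at $1\otimes 1$. Multiplicativity $\varepsilon(\alpha\beta)=\varepsilon(\alpha)\varepsilon(\beta)$ and centrality $a\varepsilon(\alpha)=\varepsilon(\alpha)a$ are obtained by feeding units into (\ref{genA33}) and shuffling them with $m_1(x\otimes 1)=x=m_1(1\otimes x)$; these are precisely the two displayed chains of equalities preceding the statement. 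The essential constraint when choosing substitutions is that, since $B$ carries no antipode, one may only ever form products $\alpha\beta$, $\beta\gamma$ of indices and never invert, so each insertion or cancellation of a unit has to be arranged through multiplication alone.

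To upgrade this to a genuine equivalence of structures, I would finally verify that the whole family is reconstructed from $\varepsilon$ and $m_1$, namely $m_\alpha(a\otimes b)=\varepsilon(\alpha)ab$. This follows from (\ref{genA33}) alone by specializing two of its three parameters to $1$: one specialization collapses $m_\alpha(a\otimes b)$ to $m_\alpha(1\otimes ab)$, and a second rewrites $m_\alpha(1\otimes c)$ as $m_1(\varepsilon(\alpha)\otimes c)=\varepsilon(\alpha)c$. Combined with the trivial round-trip identity $m_\alpha(1\otimes 1)=\varepsilon(\alpha)$, this shows the two assignments are mutually inverse. I expect the only real difficulty to be the bookkeeping of which triple $(\alpha,\beta,\gamma)$ to insert into (\ref{genA33}) at each step: the absence of inverses in $B$ makes the admissible substitutions rigid, so they must be found deliberately rather than by ``dividing out'' an index.
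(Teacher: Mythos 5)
Your proposal is correct, and its two main verifications coincide with the paper's own argument: the forward direction is the direct substitution of $m_\alpha(a\otimes b)=\varepsilon(\alpha)ab$ into (\ref{genA31})--(\ref{genA33}), and the converse consists of linearity from (\ref{genA31})--(\ref{genA32}) plus the two unit-shuffling chains (multiplicativity and centrality of $\varepsilon$) that the paper displays just before the statement. Where you genuinely go beyond the paper is the closing reconstruction lemma: using (\ref{genA33}) with two of the three indices equal to $1$, you get $m_\alpha(a\otimes b)=m_\alpha(1\otimes ab)$ (indices $(1,\alpha,1)$, first argument specialized to $1$) and $m_\alpha(1\otimes c)=m_1(\varepsilon(\alpha)\otimes c)=\varepsilon(\alpha)c$ (indices $(\alpha,1,1)$), hence $m_\alpha(a\otimes b)=\varepsilon(\alpha)ab$; both specializations are valid, and the paper has no analogue of this step. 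This addition is not cosmetic---it is what makes the stated ``if and only if'' literally true. The hypothesis ``$\varepsilon(\alpha)=m_\alpha(1\otimes 1)$ gives a $B$-algebra structure on $A$'' constrains only the values $m_\alpha(1\otimes 1)$, so without the reconstruction the backward implication can fail: for instance, take $A=k[y]$, $B=k[x]$, $m_1$ the usual product, and for $\alpha=\sum_i c_ix^i$ set $m_\alpha(a\otimes b)=c_0\,ab+\bigl(\sum_{i\geq 1}c_i\bigr)(ab)(0)$, where $(ab)(0)$ is the constant term of $ab$; then $\varepsilon$ is evaluation at $x=1$, landing in $k\subset {\mathcal Z}(A)$, a perfectly good $B$-algebra structure, yet (\ref{genA33}) fails (take $\alpha=x$, $\beta=\gamma=1$, $a=b=1$, $c=y$). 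Your extra lemma shows the relations force $\mathcal{M}$ to be exactly the family induced by $\varepsilon$, so the two assignments are mutually inverse; that is the reading under which the proposition holds, and your proof, unlike the paper's discussion, actually establishes it.
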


\section{Secondary Cohomology for Algebras}

\subsection{Deformation of Families of Products}
 Let $A$ be an associative $k$-algebra, $B$  a commutative $k$-algebra, and $\varepsilon:B\to A$  a morphism of $k$-algebras such that $\varepsilon(B)\subset {\mathcal Z}(A)$. Suppose that for each $n\in \mathbb{N}$ we have a $k$-linear map $c_n:A\otimes A\otimes B\to A$. For  $\alpha\in B$ we define $m_{\alpha,t}:A[[t]]\otimes A[[t]]\to A[[t]]$ determined by
 $$m_{\alpha,t}(a\otimes b)=\varepsilon(\alpha)ab+c_1(a\otimes b\otimes \alpha)t^1+c_2(a\otimes b\otimes \alpha)t^2+c_3(a\otimes b\otimes \alpha)t^3+...$$
 
 Notice that 
 \begin{eqnarray*}
 m_{\alpha+\beta,t}(a\otimes b)=m_{\alpha,t}(a\otimes b)+m_{\beta,t}(a\otimes b)
 \end{eqnarray*}
 and 
  \begin{eqnarray*}
 m_{q\alpha,t}(a\otimes b)=qm_{\alpha,t}(a\otimes b)
 \end{eqnarray*}
 for all $a$, $b\in A$, $\alpha$, $\beta\in B$ and $q\in k$.
 
We want to investigate under what conditions the family ${\mathcal M}=\{m_{\alpha,t}\}_{\alpha\in B}$ satisfies the following condition:
\begin{eqnarray}
m_{\alpha\beta,t}(id\otimes m_{\gamma,t})=m_{\beta\gamma,t}(m_{\alpha,t}\otimes id)
\label{genA2}
\end{eqnarray}
 
Take $a$, $b$, $c\in A$ and $\alpha$, $\beta$, $\gamma\in B$. If ${\mathcal M}$ satisfies (\ref{genA2}), then we get the  identity:
 \begin{eqnarray*} &&\varepsilon(\alpha)\varepsilon(\beta\gamma)(ab)c+c_1(\varepsilon(\alpha)ab\otimes c\otimes \beta\gamma)t+c_2(\varepsilon(\alpha)ab\otimes c\otimes \beta\gamma)t^2+...\\
&& +\varepsilon(\beta\gamma)c_1(a\otimes b\otimes \alpha)ct+c_1(c_1(a\otimes b\otimes \alpha)\otimes c\otimes \beta\gamma)t^2+...\\
&& +\varepsilon(\beta\gamma)c_2(a\otimes b\otimes \alpha)ct^2+...\\
 &&=\varepsilon(\alpha\beta)a\varepsilon(\gamma)(bc)+c_1(a\otimes \varepsilon(\gamma)bc\otimes \alpha\beta)t+c_2(a\otimes \varepsilon(\gamma)bc\otimes \alpha\beta)t^2+...\\&&+\varepsilon(\alpha\beta)ac_1(b\otimes c\otimes \gamma)t+c_1(a\otimes c_1(b\otimes c\otimes \gamma)\otimes \alpha\beta)t^2+...\\
 &&+\varepsilon(\alpha\beta)ac_2(b\otimes c\otimes \gamma)t^2+...
 \end{eqnarray*}

Since $\varepsilon$ is a morphism of $k$-algebras we have  that ${\mathcal M}$ is associative $mod \;t$ (i.e. it satisfies (\ref{genA2}) $mod \;t$). To get associativity $mod\; t^2$ the following 2-cocycle condition must be satisfied:
\begin{eqnarray}\label{cond2}
&\varepsilon(\alpha\beta)ac_1(b\otimes c\otimes \gamma)-c_1(\varepsilon(\alpha)ab\otimes c\otimes \beta\gamma)+c_1(a\otimes \varepsilon(\gamma)bc\otimes \alpha\beta)-&\\
&\varepsilon(\beta\gamma)c_1(a\otimes b\otimes \alpha)c=0.\nonumber&
\end{eqnarray}

Suppose that ${\mathcal M}$ is associative $mod \;t^2$ and we want to have associativity $mod \;t^3$, then $c_1$ and $c_2$ must satisfy  the condition:
\begin{eqnarray}\label{cond3}
&\varepsilon(\alpha\beta)ac_2(b\otimes c\otimes \gamma)-c_2(\varepsilon(\alpha)ab\otimes c\otimes \beta\gamma)+c_2(a\otimes \varepsilon(\gamma)bc\otimes \alpha\beta)-&\\
&\varepsilon(\beta\gamma)c_2(a\otimes b\otimes \alpha)c=c_1(c_1(a\otimes b\otimes \alpha)\otimes c\otimes \beta\gamma)-c_1(a\otimes c_1(b\otimes c\otimes \gamma)\otimes \alpha\beta)\nonumber
&
\end{eqnarray}
This suggest the construction of a cohomology theory that describes the obstruction to extending a family of products ${\mathcal M}$ that is associative $mod \;t^n$ to a family of products  that is associative $mod\; t^{n+1}$. 

\subsection{Secondary Cohomology of a Triple $(A,B,\varepsilon)$}

Let $A$ be an associative $k$-algebra, $B$  a commutative $k$-algebra, $\varepsilon:B\to A$ a morphism of $k$-algebras such that $\varepsilon(B)\subset {\mathcal Z}(A)$,  and $M$ an $A$-bimodule. Let  $$C^n((A,B,\varepsilon);M)=Hom_k(A^{\otimes n}\otimes B^{\otimes \frac{n(n-1)}{2}},M),$$
we need to define 
$$\delta^{\varepsilon}_n:C^n((A,B,\varepsilon);M)\to C^{n+1}((A,B,\varepsilon);M).$$
It is convenient to think about an element from   $A^{\otimes (n+1)}\otimes B^{\otimes \frac{n(n+1)}{2}}$ using the following matrix representation:
$$
\displaystyle\bigotimes
\left(
\begin{array}{cccccccc}
 a_{0}& b_{0,1} & b_{0,2} & ...&b_{0,n-2}&b_{0,n-1}&b_{0,n}\\
1 & a_{1}       & b_{1,2} &...&b_{1,n-2}&b_{1,n-1}&b_{1,n}\\
1& 1  & a_{2} &...&b_{2,n-2}      &b_{2,n-1}&b_{2,n}\\
. & .       &. &...&.&.&.\\
1& 1& 1  &...&1&a_{n-1}&b_{n-1,n}\\
1 & 1& 1  &...&1&1&a_{n}\\
\end{array}
\right),
$$
where $a_i\in A$ and $b_{i,j}\in B$. Take $f\in C^n((A,B,\varepsilon);M)$, we define: 
\begin{eqnarray*}&
\delta^{\varepsilon}_n(f)\left(\displaystyle\bigotimes
\left(
\begin{array}{cccccccc}
 a_{0}& b_{0,1} & b_{0,2} & ...&b_{0,n-2}&b_{0,n-1}&b_{0,n}\\
1 & a_{1}       & b_{1,2} &...&b_{1,n-2}&b_{1,n-1}&b_{1,n}\\
1& 1  & a_{2} &...&b_{2,n-2}      &b_{2,n-1}&b_{2,n}\\
. & .       &. &...&.&.&.\\
1& 1& 1  &...&1&a_{n-1}&b_{n-1,n}\\
1 & 1& 1  &...&1&1&a_{n}\\
\end{array}
\right)\right)=
\end{eqnarray*}
\begin{eqnarray*}
&
a_0\varepsilon(b_{0,1}b_{0,2}...b_{0,n})f\left(\displaystyle\bigotimes
\left(
\begin{array}{cccccc}
 a_{1}       & b_{1,2} &...&b_{1,n-1}&b_{1,n}\\
1  & a_{2} &...    &b_{2,n-1}&b_{2,n}\\
 .       &. &...&.&.\\
1& 1  &...&a_{n-1}&b_{n-1,n}\\
 1& 1  &...&1&a_{n}\\
\end{array}
\right)\right)-&\\
&\;&\;\\
&
f\left(\displaystyle\bigotimes
\left(
\begin{array}{ccccccc}
 \varepsilon(b_{0,1})a_{0}a_{1}       & b_{0,2}b_{1,2} &...&b_{0,n-1}b_{1,n-1}&b_{0,n}b_{1,n}\\
1  & a_{2} &...     &b_{2,n-1}&b_{2,n}\\
 .       &. &...&.&.\\
1& 1  &...&a_{n-1}&b_{n-1,n}\\
 1& 1  &...&1&a_{n}\\
\end{array}
\right)\right)+&\\
&\;&\\
& f\left(\displaystyle\bigotimes
\left(
\begin{array}{ccccccc}
 a_{0}       & b_{0,1}b_{0,2}&b_{0,3} &...&b_{0,n-1}&b_{0,n}\\
1  &\varepsilon(b_{1,2})a_{1}a_{2} &b_{1,3}b_{2,3}&...     &b_{1,n-1}b_{2,n-1}&b_{1,n}b_{2,n}\\
 .       &. &.&...&.&.\\
1& 1  &1&...&a_{n-1}&b_{n-1,n}\\
 1& 1  &1&...&1&a_{n}\\
\end{array}
\right)\right)-
\end{eqnarray*}
\begin{eqnarray*}
&.\;.\;.&\\
& \;&\\
& (-1)^if\left(\displaystyle\bigotimes
\left(
\begin{array}{cccccccc}
 a_{0}       & b_{0,1}&... &b_{0,i-1}b_{0,i}&...&b_{0,n-1}&b_{0,n}\\
1  &a_{1}&...&b_{1,i-1}b_{1,i}     &...&b_{1,n-1}&b_{1,n}\\
 .       &. &...&...&...&.&.\\
  1       &1 &...&\varepsilon(b_{i-1,i})a_{i-1}a_i&...&b_{i-1,n}b_{i,n}&b_{i-1,n}b_{i,n}\\
   .       &. &...&...&...&.&.\\
1& 1  &...&...&...&a_{n-1}&b_{n-1,n}\\
 1& 1  &...&...&...&1&a_{n}\\
\end{array}
\right)\right)+&\\
& \;&\\
&.\;.\;.&\\
& \;&
\end{eqnarray*}
\begin{eqnarray*}
&(-1)^{n-1}f\left(\displaystyle\bigotimes
\left(
\begin{array}{cccccc}
 a_{0}       & b_{0,1} &...&b_{0,n-2}b_{0,n-1}&b_{0,n}\\
1  &a_{1} &...     &b_{1,n-2}b_{1,n-1}&b_{1,n}\\
 .       &. &...&.&.\\
1& 1  &...&\varepsilon(b_{n-2,n-1})a_{n-2}a_{n-1}&b_{n-2,n}b_{n-1,n}\\
 1& 1  &...&1&a_{n}\\
\end{array}
\right)\right)+&\\
&\;&\\
&(-1)^{n}f\left(\displaystyle\bigotimes
\left(
\begin{array}{ccccccc}
 a_{0}       & b_{0,1} &...&b_{0,n-2}&b_{0,n-1}b_{0,n}\\
1  &a_{1} &...     &b_{1,n-2}&b_{1,n-1}b_{1,n}\\
 .       &. &...&.&.\\
1& 1  &...&a_{n-2}&b_{n-2,n}b_{n-1,n}\\
 1& 1  &...&1&\varepsilon(b_{n-1,n})a_{n-1}a_{n}\\
\end{array}
\right)\right)+&\\
&\;&\\
&(-1)^{n+1}\varepsilon(b_{0,n}b_{1,n}...b_{n-1,n})f\left(\displaystyle\bigotimes
\left(
\begin{array}{cccccc}
 a_{0}       & b_{0,1} &...&b_{0,n-2}&b_{0,n-1}\\
1  &a_{1} &...     &b_{1,n-2}&b_{1,n-1}\\
 .       &. &...&.&.\\
1& 1  &...&a_{n-2}&b_{n-2,n-1}\\
 1& 1  &...&1&a_{n-1}\\
\end{array}
\right)\right)a_{n}&
\end{eqnarray*}

\begin{remark} If $n=2$ and $f\in C^2((A,B,\varepsilon);M)$ we have
\begin{eqnarray*}&
\delta^{\varepsilon}_2(f)\left(\displaystyle\bigotimes
\left(
\begin{array}{ccc}
 a_{0}& b_{0,1} & b_{0,2}\\
1 & a_{1}       & b_{1,2}\\ 
1&1&a_2\\
\end{array}
\right)\right)=&\\
&\varepsilon(b_{0,1}b_{0,2})a_0f\left(\displaystyle\bigotimes
\left(
\begin{array}{cc}
 a_{1}& b_{1,2}\\
1 & a_{2}      
\end{array}
\right)\right)-
f\left(\displaystyle\bigotimes
\left(
\begin{array}{cc}
\varepsilon(b_{0,1})a_0 a_{1}& b_{0,2}b_{1,2}\\
1 & a_{2}      
\end{array}
\right)\right)+&\\
&f\left(\displaystyle\bigotimes
\left(
\begin{array}{cc}
a_0 & b_{0,1}b_{0,2}\\
1 & \varepsilon(b_{1,2})a_1a_{2}      
\end{array}
\right)\right)-
\varepsilon(b_{0,2}b_{1,2})f\left(\displaystyle\bigotimes
\left(
\begin{array}{cc}
a_0 & b_{0,1}\\
1 & a_1      
\end{array}
\right)\right)a_2.&
\end{eqnarray*}
If $M=A$ then the equations (\ref{cond2}) and (\ref{cond3}) can be written as $\delta^{\varepsilon}_2(c_1)=0$ and $\delta^{\varepsilon}_2(c_2)=c_1 \overline{\circ}   c_1$, where $c_1 \overline{\circ} c_1$ is the right had side of equation \ref{cond3}. More generally for $f$,  $g:A\otimes A\otimes B\to A$  we define
\begin{eqnarray*}&
(f\overline{\circ} g)\left(\displaystyle\bigotimes
\left(
\begin{array}{ccc}
 a& \alpha & \beta \\
1 & b      & \gamma\\
1& 1  & c 
\end{array}
\right)\right)= 
\end{eqnarray*}
\begin{eqnarray*}
f\left(\bigotimes\left(
\begin{array}{cc}
g\left(\bigotimes\left(
\begin{array}{cc}
a & \alpha \\
1   & b\\
\end{array}
\right)\right)
& \beta\gamma \\
1   & c\\
\end{array}
\right)\right)-
f\left(\bigotimes\left(
\begin{array}{cc}
a& \alpha\beta \\
1   & g\left(\bigotimes\left(
\begin{array}{cc}
b& \gamma \\
1   & c\\
\end{array}
\right)\right)\\
\end{array}
\right)\right).
\end{eqnarray*}

\end{remark}
\begin{remark} Notice that $\delta_n^{\varepsilon}$ consists of $n+2$ terms. For $2\leq i\leq n+1$, the $i$-th term of that sum  is the result of evaluating $f$ on the tensor matrix that is obtained by multiplying  the entries in the $i$-th row (respectively $i$-th column) with the entries in the  $(i-1)$-st row (respectively $i-1$-st column). If we think that the $0$-th row and the $n+1$-st column consists of $1\otimes_A 1\otimes_A ...\otimes_A 1\in A\otimes_A A\otimes_A...\otimes_A A$, then a similar mnemonic rule can be used with the extra convention that $b\cdot 1=\varepsilon(b)\in {\mathcal Z}(A)$. 
\label{remark2}
\end{remark}

\begin{proposition} $(C^n((A,B,\varepsilon);M),\delta_n^{\varepsilon})$ is a complex (i.e. $\delta_{n+1}^{\varepsilon}\delta_n^{\varepsilon}=0$). We denote its homology by $H^n((A,B,\varepsilon);M)$ and we call it the secondary cohomology of the triple $(A,B,\varepsilon)$ with coefficients in $M$.
\end{proposition}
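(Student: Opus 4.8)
The plan is to realize $\delta_n^{\varepsilon}$ as an alternating sum of $n+2$ coface operators and to verify the cosimplicial identities among them, after which the relation $\delta_{n+1}^{\varepsilon}\delta_n^{\varepsilon}=0$ follows by the standard cancellation-in-pairs argument familiar from the Hochschild complex. Concretely, I would write
$$\delta_n^{\varepsilon}=\sum_{i=0}^{n+1}(-1)^i\,\partial_i,$$
where $\partial_0$ is the first term (the left module factor $a_0\varepsilon(b_{0,1}\cdots b_{0,n})$ applied to the datum with the $0$-th row deleted), $\partial_{n+1}$ is the last term (the right module factor $\varepsilon(b_{0,n}\cdots b_{n-1,n})(\,\cdot\,)a_n$ applied to the datum with the $n$-th column deleted), and for $1\le i\le n$ the operator $\partial_i$ merges the $(i-1)$-st and $i$-th rows and columns, placing $\varepsilon(b_{i-1,i})a_{i-1}a_i$ on the diagonal and forming the products $b_{i-1,v}b_{i,v}$ and $b_{u,i-1}b_{u,i}$ of the overlapping $B$-entries. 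Each $\partial_i$ is precomposition of $f$ with a degree-lowering map on tensor matrices (decorated, for the extreme indices, by a module factor), so that $\delta_{n+1}^{\varepsilon}\delta_n^{\varepsilon}(f)=\sum_{i,j}(-1)^{i+j}\partial_j\partial_i(f)$.

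The heart of the matter is the cosimplicial identity
$$\partial_j\partial_i=\partial_i\partial_{j-1}\qquad(0\le i<j\le n+2),$$
which, once established, makes the above double sum telescope to zero exactly as in the classical case. I would check it in three layers. First, the $A$-entries $a_0,\dots,a_n$ transform purely according to the bar/Hochschild face maps: merging adjacent rows effects the products $a_{i-1}a_i$, and the relevant commutations are the usual simplicial relations, which hold because $A$ is associative. Second, the $B$-entries $b_{u,v}$ transform under two successive merges according to precisely the combinatorial formulas $h_{u,v}$ recorded for the $K(G,2)$ construction in Section~1, with the group operation replaced by the commutative multiplication of $B$; here commutativity of $B$ is exactly what guarantees that the two orders of merging yield identical products of the $b_{u,v}$. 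Third, one must confirm that these two layers are compatible on the diagonal entries $\varepsilon(b_{i-1,i})a_{i-1}a_i$ and, for the extreme operators $\partial_0$ and $\partial_{n+1}$, that the collected scalars $\varepsilon(b_{0,1}\cdots b_{0,n})$ and $\varepsilon(b_{0,n}\cdots b_{n-1,n})$ split correctly across a second face; this uses that $\varepsilon$ is an algebra morphism (so products in $B$ pass through $\varepsilon$ to products in $A$) and that $\varepsilon(B)\subset\mathcal{Z}(A)$ (so the resulting central elements may be freely commuted past the $a_i$ and past $f$).

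I expect the main obstacle to lie entirely in this third layer, namely the cross-identities relating the extreme operators to the interior ones ($\partial_i\partial_0=\partial_0\partial_{i-1}$, $\partial_{n+1}\partial_i=\partial_i\partial_n$, and the compatibility $\partial_{n+1}\partial_0=\partial_0\partial_n$). In these cases a $B$-factor that one operator attaches to an entire row or column is instead absorbed by the other operator into a product of overlapping entries, and one must verify that $\varepsilon$ applied to the \emph{total} product of $b$'s agrees on the two sides. Because $B$ is commutative and $\varepsilon$ lands in $\mathcal{Z}(A)$, each such contribution can be pulled out as a single central scalar $\varepsilon\bigl(\prod b_{u,v}\bigr)$ and matched term by term, so the verification is a bookkeeping check rather than a structural difficulty. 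As a final sanity check, setting $B=k$ and all $b_{u,v}=1$ collapses every $\partial_i$ to the classical Hochschild face map and $\delta_n^{\varepsilon}$ to $\delta_n$, so the identity $\delta_{n+1}^{\varepsilon}\delta_n^{\varepsilon}=0$ specializes to the known relation $\delta_{n+1}\delta_n=0$.
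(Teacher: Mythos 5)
Your proposal is correct and takes essentially the same route as the paper: the paper's one-line proof consists precisely of invoking Remark \ref{remark2}, i.e.\ the observation that each of the $n+2$ terms of $\delta_n^{\varepsilon}$ is a face-type operator merging adjacent rows and columns of the tensor matrix (with the stated conventions for the extreme terms, where $b\cdot 1=\varepsilon(b)\in\mathcal{Z}(A)$), so that the simplicial identities and the standard cancellation-in-pairs argument yield $\delta_{n+1}^{\varepsilon}\delta_n^{\varepsilon}=0$. Your write-up is in fact more explicit than the paper's proof, which merely points to that remark and to the analogous verification of simplicial identities in the following section.
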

\begin{proof}
It follows from Remark \ref{remark2} from above; see also  the discussion from the next section.
\end{proof}

\begin{example} When $B=k$ and $\varepsilon:k\to A$ we have that $H^n((A,k,\varepsilon);M)$ is the usual Hochschild Cohomology. 
\end{example}
\begin{example} Let $G$ be a group, $H$ an abelian group, and $K$ a $G$-module. Take $A=k[G]$, $B=k[H]$, $\varepsilon:B\to A$ determined by $\varepsilon(\alpha)=1$ for all $\alpha\in H$,  and $M=k[K]$. One can see that $H^n((A,B,\varepsilon);M)$ is the "linearization" of the secondary cohomology introduced in \cite{staic}. 
\end{example}

\begin{theorem} Let $A$ be a $k$-algebra, $B$ a commutative $k$-algebra, $\varepsilon:B\to A$ a morphism of $k$-algebras such that $\varepsilon(B)\subset {\mathcal Z}(A)$.  Suppose that for all $n\geq 1$ we $c_n:A\otimes A\otimes B\to A$ $k$-linear maps. We consider  a family of products
${\mathcal M}=\{m_{\alpha,t}\}_{\alpha\in B}$ where 
 $m_{\alpha,t}(a\otimes b)=\varepsilon(\alpha)ab+c_1(a\otimes b\otimes \alpha)t^1+c_2(a\otimes b\otimes \alpha)t^2+c_3(a\otimes b\otimes \alpha)t^3+...$

i) The  family  ${\mathcal M}$ is associative $mod\; t^2$ if and only if $c_1\in Z^2((A,B,\varepsilon);A)$. Moreover $c_1\in H^2((A,B,\varepsilon);A)$ is determined by the isomorphism class of ${\mathcal M}$.

ii) Suppose that ${\mathcal M}$ is associative $mod\; t^{n+1}$, then ${\mathcal M}$ can be extended to a family of products that is associative $mod\; t^{n+2}$ if and only if $c_1\overline{\circ}  c_n+c_2\overline{\circ} c_{n-1}+...+c_n\overline{\circ}  c_1=0\in H^3((A,B,\varepsilon);A)$.
\end{theorem}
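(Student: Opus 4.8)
The plan is to expand the generalized associativity condition (\ref{genA2}) as a formal power series in $t$ and to match the coefficients of each power. Write $m_{\alpha,t}=\sum_{i\geq 0}m^{(i)}_{\alpha}t^i$, where $m^{(0)}_{\alpha}(a\otimes b)=\varepsilon(\alpha)ab$ and $m^{(i)}_{\alpha}(a\otimes b)=c_i(a\otimes b\otimes\alpha)$ for $i\geq 1$. Applying both sides of (\ref{genA2}) to $a\otimes b\otimes c$ and collecting the coefficient of $t^N$ yields
\begin{eqnarray*}
\sum_{i+j=N}\bigl[m^{(i)}_{\beta\gamma}(m^{(j)}_{\alpha}(a\otimes b)\otimes c)-m^{(i)}_{\alpha\beta}(a\otimes m^{(j)}_{\gamma}(b\otimes c))\bigr]=0.
\end{eqnarray*}
First I would isolate the four terms with $i=0$ or $j=0$: since $\varepsilon$ is multiplicative with central image, these recombine into exactly the four summands of $-\delta^{\varepsilon}_2(c_N)$ displayed in the Remark following the definition of the differential. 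The surviving terms, indexed by $i,j\geq 1$ with $i+j=N$, are by definition $c_i\,\overline{\circ}\,c_j$. Thus associativity in degree $t^N$ is equivalent to
\begin{eqnarray*}
\delta^{\varepsilon}_2(c_N)=\sum_{i+j=N,\ i,j\geq 1}c_i\,\overline{\circ}\,c_j. \quad (\ast_N)
\end{eqnarray*}

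For part (i), the coefficient of $t^0$ vanishes automatically, both sides equaling $\varepsilon(\alpha\beta\gamma)abc$, so $\mathcal{M}$ is always associative $mod\;t$. Relation $(\ast_1)$ has empty right-hand side, so associativity $mod\;t^2$ is equivalent to $\delta^{\varepsilon}_2(c_1)=0$, i.e.\ $c_1\in Z^2((A,B,\varepsilon);A)$. For the last assertion I would call two such families equivalent when there is a formal isomorphism $\varphi_t=\mathrm{id}+\varphi_1 t+\varphi_2 t^2+\cdots$ of $A[[t]]$ satisfying $\varphi_t\circ m_{\alpha,t}=m'_{\alpha,t}\circ(\varphi_t\otimes\varphi_t)$ for every $\alpha\in B$; comparing the coefficients of $t^1$ then shows that $c'_1-c_1=\pm\delta^{\varepsilon}_1(\varphi_1)$ is a coboundary, so the class of $c_1$ in $H^2((A,B,\varepsilon);A)$ depends only on the isomorphism type of $\mathcal{M}$.

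For part (ii), assume $\mathcal{M}$ is associative $mod\;t^{n+1}$, i.e.\ $(\ast_N)$ holds for $N=1,\dots,n$. Extending to associativity $mod\;t^{n+2}$ means solving $(\ast_{n+1})$ for the single unknown $c_{n+1}$, namely $\delta^{\varepsilon}_2(c_{n+1})=\mathrm{Ob}$ with $\mathrm{Ob}=c_1\overline{\circ}c_n+c_2\overline{\circ}c_{n-1}+\cdots+c_n\overline{\circ}c_1$. Such a $c_{n+1}$ exists precisely when $\mathrm{Ob}\in\mathrm{Im}\,\delta^{\varepsilon}_2$. The decisive step is therefore to show that $\mathrm{Ob}$ is a $3$-cocycle; granting this, the condition $\mathrm{Ob}\in\mathrm{Im}\,\delta^{\varepsilon}_2$ is exactly the vanishing of $[\mathrm{Ob}]\in H^3((A,B,\varepsilon);A)$, which is the assertion of (ii).

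Proving $\delta^{\varepsilon}_3(\mathrm{Ob})=0$ is where I expect the main difficulty to lie. Following the classical argument of Gerstenhaber \cite{g}, \cite{gs}, the strategy is to establish a pre-Lie (Leibniz-type) identity linking the differential and the composition $\overline{\circ}$, schematically $\delta^{\varepsilon}_3(f\overline{\circ}g)=(\delta^{\varepsilon}_2 f)\star g\pm f\star(\delta^{\varepsilon}_2 g)\pm(\text{associator terms})$, where $\star$ denotes the appropriate extensions of $\overline{\circ}$ to the cochain degrees in play. Substituting the lower-order relations $\delta^{\varepsilon}_2(c_p)=\sum_{i+j=p}c_i\overline{\circ}c_j$, valid for $p\leq n$, into $\delta^{\varepsilon}_3(\mathrm{Ob})=\sum_{p+q=n+1}\delta^{\varepsilon}_3(c_p\overline{\circ}c_q)$ should then produce a telescoping cancellation identical in shape to the Hochschild case, leaving $\delta^{\varepsilon}_3(\mathrm{Ob})=0$. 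The one genuinely new ingredient is the bookkeeping of the auxiliary $B$-entries $b_{u,v}$ and the central factors $\varepsilon(b_{u,v})$ as they propagate through these identities; once the pre-Lie relation is verified in the secondary setting, the obstruction-theoretic conclusion follows as usual.
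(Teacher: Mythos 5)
Your proposal is correct and takes essentially the same approach as the paper: the paper likewise obtains the conditions you call $(\ast_N)$ by expanding the generalized associativity identity coefficient-by-coefficient in $t$ (its equations (\ref{cond2}) and (\ref{cond3}) are exactly your $(\ast_1)$ and $(\ast_2)$), and it defers the verification that the obstruction $c_1\overline{\circ}c_n+\cdots+c_n\overline{\circ}c_1$ is a $3$-cocycle to ``standard arguments for any deformation theory,'' just as you defer it to the Gerstenhaber pre-Lie argument. The one point the paper does spell out, the isomorphism-invariance of the class of $c_1$, you prove in the identical way, by comparing coefficients of $t$ in $m^{d}_{\alpha,t}(f(a)\otimes f(b))=f(m^{c}_{\alpha,t}(a\otimes b))$ to get $c_1-d_1=\delta^{\varepsilon}_1(f_1)$.
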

\begin{proof}  The proof follows from the above discussion and standard arguments for any deformation theory. 
We will only give details for  the fact that  the class of $c_1\in H^2((A,B,\varepsilon);A)$ is determined by the isomorphism class of ${\mathcal M}$. 

Consider two families of products $\{m_{\alpha,t}^c\}_{\alpha\in B}$ and $\{m_{\alpha,t}^d\}_{\alpha\in B}$ on $A[[t]]$, $$m_{\alpha,t}^c(a\otimes b)=\varepsilon(\alpha)ab+c_1(a\otimes b\otimes \alpha)t^1+...$$
 and 
 $$m_{\alpha,t}^d(a\otimes b)=\varepsilon(\alpha)ab+d_1(a\otimes b\otimes \alpha)t^1+...$$ S
 uppose that we have $f:A[[t]]\otimes A[[t]]$, 
 $$f(a)=a+f_1(a)t+f_2(a)t^2+...$$ 
 an isomorphism, such that $m_{\alpha,t}^d(f(a)\otimes f(b))=f(m_{\alpha,t}^c(a\otimes b))$. If we want this identity to be true $mod\; t^2$ we get the identity:
\begin{eqnarray*}
&\varepsilon(\alpha)ab+d_1(a\otimes b\otimes \alpha)t +\varepsilon(\alpha)af_1(b)t+\varepsilon(\alpha)f_1(a)bt+O(t^2)=\\
&\varepsilon(\alpha)ab+f(\varepsilon(\alpha)ab)t+c_1(a\otimes b\otimes \alpha)t+O(t^2).
\end{eqnarray*}
This is equivalent to 
\begin{eqnarray*}
\varepsilon(\alpha)af_1(b)-f(\varepsilon(\alpha)ab)+\varepsilon(\alpha)f_1(a)b=c_1(a\otimes b\otimes \alpha)-d_1(a\otimes b\otimes \alpha)
\end{eqnarray*}
or 
$$c_1-d_1=\delta_1^{\varepsilon}(f_1)$$ 
And so,  $c_1$ and $d_1$ have the same class in $H^2((A,B,\varepsilon);A)$.
\end{proof}

\begin{remark} If  $(A[[t]],m_{1,t})$ admits a unit $1\in A[[t]]$ then it follows from  Proposition \ref{propfam} that  $(A[[t]],m_{1,t})$ is a $B$-algebra with the map $\overline{\varepsilon}:B\to A[[t]]$, $\overline{\varepsilon}(\alpha)=m_{\alpha}(1\otimes 1)$.
\end{remark}

\begin{remark} There is a trivial way of getting an $B$-algebra structure on $A[[t]]$. More precisely, if $c_i:A\otimes A\to A$ have the property $c_i(a\otimes 1)=c_i(1\otimes a)=0$ for all $i\geq 1$ and $c_i$ are $B$-linear, then there exists a natural inclusion  $B\to A[[t]]$  which gives a $B$-algebra structure on $A[[t]]$. This coresponds to $\tilde{c}_i:A\otimes A\otimes B\to A$, $\tilde{c}_i(a\otimes b\otimes \alpha)=\varepsilon(\alpha)c_i(a\otimes b)$. 
\end{remark}

\begin{remark}  There is a natural map $r:H^n((A,B,\varepsilon);M)\to H^n(A,M)$. When $n=2$ and $n=3$ this map corresponds to forgetting the $B$-algebra structure on $A[[t]]$.
\end{remark}

\section{The Second Cyclic Module of a Commutative Algebra}

In this section $B$ is a commutative $k$-algebra and $\varepsilon :B\to k$ is a morphism of $k$-algebras.

When we want to define a cyclic map $\tau_n$ on the $k$-module $B^{\otimes \frac{n(n-1)}{2}}$, the main problem is that  we do not have  enough components to get a cyclic action. That is because there is no natural action of the cyclic group $C_{n+1}$ on the index set $\{(u,v)\vert \;0\leq u<v\leq n-1\}$. To fix this problem  we have to  find a set that contains the above index set, and admits a natural action of the cyclic group $C_{n+1}$.  Let  $\tau_n:\{0,1,...,n\}\to \{0,1,...,n\}$ defined by $\tau_n(i)=i+1$ (with the convention that $\tau(n)=n+1=0$). 

We take   $I_n=\{(u,v)\vert \; 0\leq u,v\leq n,\; u\neq v,\; u\neq v-1\}$. One can see that we have an action of the cyclic group $C_{n+1}$ on $I_n$ given by
$$(u,v)\to (\tau_n(u),\tau_n(v)).$$
We define  
$$K_n=B^{\otimes I_n}.$$

Just like in the previous section, the best way to see an element of $K_n$ is to think about it as a $(n+1)\times (n+1)$ tensor matrix for which certain entries are 1, more exactly we will take $b_{i,i}=1$ and $b_{\tau_n(i),i}=1$. 

$$
\displaystyle\bigotimes_{(u,v)\in I_n}
\left(
\begin{array}{cccccccc}
1 & b_{0,1} & b_{0,2} & b_{0,3}&...&b_{0,n-2}&b_{0,n-1}&1\\
1 & 1       & b_{1,2} &b_{1,3} &...&b_{1,n-2}&b_{1,n-1}&b_{1,n}\\
b_{2,0}& 1  & 1 &b_{2,3} &...&b_{2,n-1}      &b_{2,n-2}&b_{2,n}\\
. & .       & . &. &...&.&.&.\\
b_{n-1,0} & b_{n-1,1}& b_{n-1,2} &b_{n-1,3} &...&1&1&b_{n-1,n}\\
b_{n,0} & b_{n,1}& b_{n,2} &b_{n,3} &...&b_{n,n-2}&1&1\\
\end{array}
\right)
$$
As one can easily see there are $n^2-1$ nontrivial entries in this tensor matrix. Notice that we put $1$ on the trivial entries and not $0$ since the matrix represents a tensor. However, we point out that this $1$ is not the identity of $B$, but rather a generator of the $B$-module $k$, where $k$ is an $B$-module with  $b\cdot 1=\varepsilon(b)1$.   

First define the action of the cyclic group $C_{n+1}$ $\tau_n:K_n\to K_n$
$$\tau_n(\otimes_{(u,v)\in I_n}b_{u,v})=\otimes_{(u,v)\in I_n}b_{\tau_n(u),\tau_n(v)}$$
It is easy to see that in the tensor-matrix notation this corresponds to shifting simultaneously the rows and columns with one unit (the last row and last column become the first row respectively first column).
For obvious reasons one has $\tau_n^{n+1}=id$.

Next define $\partial_n:K_n\to K_{n-1}$ 
$$\partial_n(\displaystyle\bigotimes\limits_{(u,v)\in I_n}b_{u,v})=\varepsilon(b_{n-1,n}b_{n,n-2}b_{0,n-1})\displaystyle\bigotimes\limits_{(u,v)\in I_{n-1}}d_{u,v}$$
where 
$$d_{u,v}= \left\{
\begin{array}{c l}
b_{u,v}\; &{\rm if}\; u\neq n-1\; {\rm and} \;v \neq n-1\\
b_{u,n-1}b_{u,n}\; &{\rm if}\; v= n-1\\
b_{n-1,v}b_{n,v}\; &{\rm if}\; u= n-1 
\end{array}
\right.
$$
or in the tensor matrix form

\begin{eqnarray*}&
\partial_n\left(\displaystyle\bigotimes
\left(
\begin{array}{ccccccc}
1 & b_{0,1} & b_{0,2} &...&b_{0,n-2}&b_{0,n-1}&1\\
1 & 1       & b_{1,2} &...&b_{1,n-2}&b_{1,n-1}&b_{1,n}\\
b_{2,0}& 1  & 1 &...&b_{2,n-1}      &b_{2,n-2}&b_{2,n}\\
. & .       & . &...&.&.&.\\
b_{n-1,0} & b_{n-1,1}& b_{n-1,2} &...&1&1&b_{n-1,n}\\
b_{n,0} & b_{n,1}& b_{n,2} &...&b_{n,n-2}&1&1\\
\end{array}
\right)\right)=
\end{eqnarray*}
\begin{eqnarray*}&
\displaystyle\bigotimes
\left(
\begin{array}{ccccccc}
1 & b_{0,1} & b_{0,2} & ...&b_{0,n-2}&\varepsilon(b_{0,n-1})\\
1 & 1       & b_{1,2} &...&b_{1,n-2}&b_{1,n-1}b_{1,n}\\
b_{2,0}& 1  & 1 &...&b_{2,n-1}      &b_{2,n-2}b_{2,n}\\
. & .       & . &...&.&.\\
b_{n-2,0} & b_{n-2,1}& b_{n-2,2}&...&1&b_{n-2,n-1}b_{n-2,n}\\
b_{n-1,0}b_{n,0} & b_{n-1,1}b_{n,1}& b_{n-1,2}b_{n,2} &...&\varepsilon(b_{n,n-2})&\varepsilon(b_{n-1,n})\\
\end{array}
\right)
\end{eqnarray*}

In the tensor matrix notation $\partial_n$ corresponds to collapsing simultaneously the $n$-th row and $n$-th column in the $(n-1)$-st row and $(n-1)$-st column. Here we use the convention that the product between $b\in B$ and $1\in k$ is $\varepsilon(b)1\in k$.  

Also we define $s_n:K_n\to K_{n+1}$
$$s_n(\displaystyle\bigotimes\limits_{(u,v)\in I_n}b_{u,v})=\displaystyle\bigotimes\limits_{(u,v)\in I_{n+1}}c_{u,v}$$
where 
$$c_{u,v}= \left\{
\begin{array}{c l}
b_{u,v}\; &{\rm if} \;u\neq n+1\; {\rm and} \;v \neq n+1\\
1\; &{\rm if}\;  u=n+1\; {\rm or} \;v = n+1\\
\end{array}
\right.
$$

or in the tensor matrix form
\begin{eqnarray*}&
s_n\left(\displaystyle\bigotimes
\left(
\begin{array}{ccccccc}
1 & b_{0,1} & b_{0,2} & ...&b_{0,n-2}&b_{0,n-1}&1\\
1 & 1       & b_{1,2} &...&b_{1,n-2}&b_{1,n-1}&b_{1,n}\\
b_{2,0}& 1  & 1 &...&b_{2,n-1}      &b_{2,n-2}&b_{2,n}\\
. & .       & . &...&.&.&.\\
b_{n-1,0} & b_{n-1,1}& b_{n-1,2} &...&1&1&b_{n-1,n}\\
b_{n,0} & b_{n,1}& b_{n,2} &...&b_{n,n-2}&1&1\\
\end{array}
\right)\right)=
\end{eqnarray*}
\begin{eqnarray*}&
\displaystyle\bigotimes
\left(
\begin{array}{cccccccc}
1 & b_{0,1} & b_{0,2} & ...&b_{0,n-2}&b_{0,n-1}&1&1\\
1 & 1       & b_{1,2} &...&b_{1,n-2}&b_{1,n-1}&b_{1,n}&1\\
b_{2,0}& 1  & 1 &...&b_{2,n-1}      &b_{2,n-2}&b_{2,n}&1\\
. & .       & . &...&.&.&.&.\\
b_{n-1,0} & b_{n-1,1}& b_{n-1,2} &...&1&1&b_{n-1,n}&1\\
b_{n,0} & b_{n,1}& b_{n,2} &...&b_{n,n-2}&1&1&1\\
1 & 1& 1 &...&1&1&1&1\\
\end{array}
\right)
\end{eqnarray*}
 
In the tensor matrix notation $s_n$ corresponds to adding one more row and one more column of $1$. More precisely the elements from the position $(n+1,n)$, $(n+1,n+1)$ and $(0,n+1)$ are $1\in k$ while the rest are $1\in B$.  
Next we define $\partial_i:K_n\to K_{n-1}$ and $s_i:K_n\to K_{n+1}$
\begin{eqnarray}
\partial_i=\tau_{n-1}^{i-n}\partial_n\tau_n^{n-i}\label{pari}
\end{eqnarray}
\begin{eqnarray}
s_i=\tau_{n+1}^{i-n}s_n\tau_n^{n-i}\label{si}
\end{eqnarray}
One can see that in the tensor matrix notation $\partial_i$ correspond to collapsing simultaneously the $i$-th row (respectively $i$-th column) into the $i-1$-st row (respectively $i-1$-st column). Also $s_i$ consist of inserting a row and a column of $1$ after $i$-th row, respectively $i$-th column. Using these interpretations, it is easy to show that
\begin{eqnarray*}
&\partial_i\partial_j=\partial_{j-1}\partial_i \; {\rm if}\; i<j\\
&s_is_j=s_{j+1}s_i \; {\rm if}\; i\leq j,
\end{eqnarray*}
and
$$\partial_i s_j= \left\{
\begin{array}{c l}
s_{j-1}\partial_i\; &{\rm if}\; i<j\\
id_{K_n}&{\rm if}\; i=j\;{\rm or }\; i=j+1\\
s_j\partial_{i-1} &{\rm if}\;  i>j+1.
\end{array}
\right.$$
Finally, using \ref{pari} and \ref{si} we get 
\begin{eqnarray*}
\partial_i\tau_n=\tau_{n-1}\partial_{i-1}\\
s_i\tau_n=\tau_{n+1}\partial_{i-1}.
\end{eqnarray*}

\begin{theorem} $\,_2K(B)=(B^{\otimes (n^2-1)},\partial_i,s_i\tau_n)$ is a cyclic $k$-module.
\end{theorem}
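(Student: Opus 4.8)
The plan is to verify that the data $(K_n,\partial_i,s_i,\tau_n)$ satisfies Connes' axioms for a cyclic $k$-module: that $(K_n,\partial_i,s_i)$ is a simplicial $k$-module, and that the operators $\tau_n$ satisfy $\tau_n^{n+1}=\mathrm{id}$ together with the compatibility relations $\partial_i\tau_n=\tau_{n-1}\partial_{i-1}$, $s_i\tau_n=\tau_{n+1}s_{i-1}$ for $1\le i\le n$, and the two boundary relations $\partial_0\tau_n=\partial_n$, $s_0\tau_n=\tau_{n+1}^2 s_n$. Since the index set $I_n$ has $n^2-1$ elements we identify $K_n=B^{\otimes I_n}$ with $B^{\otimes(n^2-1)}$, so only these relations remain to be checked. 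Most of them have in fact already been recorded in the discussion preceding the statement; the role of the proof is to assemble them and to isolate the one genuinely new point.

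The relation $\tau_n^{n+1}=\mathrm{id}$ is immediate, since $\tau_n$ is induced by the order-$(n+1)$ action of $C_{n+1}$ on $I_n$ given by $(u,v)\mapsto(\tau_n(u),\tau_n(v))$. Next, I would observe that the four families of $\tau$-compatibilities are forced by the very definitions (\ref{pari}) and (\ref{si}). For $1\le i\le n$ one computes directly
\[\partial_i\tau_n=\tau_{n-1}^{i-n}\partial_n\tau_n^{n-i}\tau_n=\tau_{n-1}\cdot\tau_{n-1}^{(i-1)-n}\partial_n\tau_n^{n-(i-1)}=\tau_{n-1}\partial_{i-1},\]
and identically $s_i\tau_n=\tau_{n+1}s_{i-1}$. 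For the boundary cases one invokes $\tau_n^{n+1}=\mathrm{id}$: from (\ref{pari}), $\partial_0=\tau_{n-1}^{-n}\partial_n\tau_n^{n}=\partial_n\tau_n^{-1}$ (using $\tau_{n-1}^{n}=\mathrm{id}$ and $\tau_n^{n}=\tau_n^{-1}$), so $\partial_0\tau_n=\partial_n$; similarly from (\ref{si}), $s_0=\tau_{n+1}^{-n}s_n\tau_n^{n}=\tau_{n+1}^{2}s_n\tau_n^{-1}$, so $s_0\tau_n=\tau_{n+1}^{2}s_n$. Thus every cyclic compatibility collapses onto the single input $\tau_n^{n+1}=\mathrm{id}$.

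It then remains to prove the simplicial identities for $(K_n,\partial_i,s_i)$. Since each $\partial_i$ and $s_i$ is a $\tau$-conjugate of the top operators $\partial_n$ and $s_n$, I would reduce the full list to a handful of base identities involving only $\partial_n$, $\partial_{n-1}$ and $s_n$ (for example $\partial_{n-1}\partial_n=\partial_{n-1}\partial_{n-1}$), and then transport the general relation by conjugating with the appropriate power of $\tau$, using the compatibilities just established to shift the indices back into range. Each base identity is verified on the tensor-matrix picture, where $\tau_n$ shifts rows and columns simultaneously, $\partial_n$ collapses the last row and column onto the $(n-1)$-st, and $s_n$ inserts a trivial row and column; away from the $\varepsilon$-twists these manipulations are combinatorially the same as in the $K(G,2)$ construction recalled in the preliminaries.

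The main obstacle, and the only place where the secondary structure really intervenes, is the bookkeeping of the $\varepsilon$-scalars. I would first confirm that $\partial_n$ and $s_n$ are well defined on $K_n=B^{\otimes I_n}$; in particular the entries of the last row and column that land on forbidden positions (the $1\in k$ slots) must, under the convention $b\cdot 1=\varepsilon(b)1$, produce exactly the prefactor $\varepsilon(b_{n-1,n}b_{n,n-2}b_{0,n-1})$ appearing in the definition of $\partial_n$. The delicate step is then to check, in each base simplicial identity, that the same product of $\varepsilon$-factors is emitted on both sides after relabeling---e.g.\ that a double collapse $\partial_{n-1}\partial_n$ and its counterpart $\partial_{n-1}\partial_{n-1}$ contribute matching scalars---so that the $\varepsilon$-twists do not obstruct the combinatorial identities. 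Once these scalar matchings are confirmed, the remaining bookkeeping is routine, and the theorem follows.
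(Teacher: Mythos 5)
Your proposal is correct and takes essentially the same route as the paper's (very terse) proof: the cyclic compatibilities are obtained formally from the conjugation definitions of $\partial_i$ and $s_i$ together with $\tau_n^{n+1}=\mathrm{id}$, while the simplicial identities are verified combinatorially in the tensor-matrix picture, with the $\varepsilon$-scalar bookkeeping being the only genuinely new point beyond the $K(G,2)$ case. If anything, you are more explicit than the paper, which omits the boundary relations $\partial_0\tau_n=\partial_n$ and $s_0\tau_n=\tau_{n+1}^2 s_n$ that you derive correctly.
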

\begin{proof}
It follows from the above discussion.
\end{proof}

\begin{remark} One can use a similar idea  to construct a cyclic module associated to a triple $(A,B,\varepsilon)$ as in previous section. In that situation we need to define $\,_2K(A,B,\varepsilon)_n=A^{\otimes (n+1)}\otimes B^{\otimes (n^2-1)}$.
\end{remark}




\bibliographystyle{amsalpha}

\end{document}